\newcommand{\ZZ}{\mathbb Z}
\newcommand{\QQ}{\mathbb{Q}}
\newcommand{\CC}{\mathbb{C}}
\newcommand{\kbar}{\overline{k}}
\newcommand{\PP}{\mathbb P}
\newcommand{\FF}{\mathbb F}
\newcommand{\cO}{\mathcal{O}}
\newcommand{\Xbar}{\overline{X}}
\DeclareMathOperator{\HH}{H}
\DeclareMathOperator{\Pic}{Pic}
\DeclareMathOperator{\Div}{Div}
\DeclareMathOperator{\Aut}{Aut}
\DeclareMathOperator{\Gal}{Gal}
\DeclareMathOperator{\rk}{rk}
\theoremstyle{plain}
\newtheorem{theorem}{Theorem}[section]
\newtheorem{corollary}[theorem]{Corollary}
\newtheorem{proposition}[theorem]{Proposition}
\newtheorem{lemma}[theorem]{Lemma}
\newtheorem{algorithm}[theorem]{Algorithm}
\theoremstyle{definition}
\newtheorem{example}[theorem]{Example}
\newtheorem{Input}[theorem]{Input}
\newtheorem{Output}[theorem]{Output}
\newtheorem{remark}[theorem]{Remark}
\title[]{A practical algorithm to compute the geometric Picard lattice of K3 surfaces of degree $2$}
\author[]{Dino Festi}
\date{}
\address[Dino Festi]{Institut f\"{u}r Mathematik \\ Johannes Gutenberg--Universit\"{a}t \\
Staudingerweg~9, 
55128 Mainz,
Germany }
\email{dinofesti@gmail.com}
\begin{document}

\begin{abstract}
Let $k$ be either a number a field or a function field over $\QQ$ with finitely many variables.
We present a practical algorithm to compute the geometric Picard lattice of a K3 surface over $k$ of degree $2$, i.e., 
a double cover of the projective plane over $k$ ramified above a smooth sextic curve.
The algorithm might not terminate;
if it terminates then it returns a proven correct answer.
\end{abstract}

\maketitle

\section{Introduction}

The Picard lattice of a K3 surface is a powerful tool to understand both the arithmetic and the geometry of a K3 surface.
From a geometric point of view, by looking at its Picard lattice we can read off whether a surface is Kummer or not or
whether it admits an elliptic fibration.
From an arithmetic point of view we can sometimes see whether rational points on the surface are potentially dense,
or gather information about the Brauer group of the surface and hence on the Brauer--Manin obstruction for the existence of rational points on the surface.

Given the importance of this invariant,
much effort has been put in finding ways to compute it.
Ronald van Luijk, in his PhD thesis (cf.~\cite{vL05}), presents a practical method to give an upper bound for the rank of the Picard lattice of a K3 surface over a number field.
In~\cite{HKT13}, Hassett, Kresch and Tschinkel
give a theoretical algorithm 
that in a priori bounded time returns the geometric Picard lattice of a given K3 surface of degree $2$.
A more general conditional algorithm to compute the N\'eron--Severi  group of any smooth projective integral variety is given by Bjorn Poonen, Damiano Testa, and Ronald van Luijk in~\cite{PTvL15}.
For a more detailed account of the progress in this matter,
see \cite[Section 2]{PTvL15}.

Despite all this effort, 
we do not have yet a practical algorithm that, 
given the equation defining a K3 surface as input,
returns the Picard lattice as output.
In fact, none of the algorithms presented so far has been ever used in practice, 
let alone implemented in any computer algebra.

In this paper we present a practical algorithm to compute the Picard lattice of a K3 surface of degree $2$ over a field $k$ that is either a number field or a function field over $\QQ$ with finitely many variables.
This algorithm might not terminate (cf. Section~\ref{s:Issues});
if it is forced to terminate 
(because of time limits, for example),
then it still returns a sublattice of the geometric Picard lattice of the surface:
this sublattice can still be used for some of the applications mentioned before, e.g., checking the existence of elliptic fibrations or potential density of rational points,
and in fact this sublattice might still turn out to be the full geometric Picard lattice.
If the algorithm terminates, 
it returns the full geometric Picard lattice.
With little extra work, the same algorithm can  also be applied to K3 surfaces obtained as resolution of double cover of the projective plane ramified above a sextic curve with du Val singularities  (see Remark~\ref{r:Singularities})
and, as remarked by Alice Garbagnati, 
to K3 surfaces given by quartic surfaces of $\PP^3$ admitting one node (see Remark~\ref{r:Qn}).
Algorithm~\ref{a:Main} is practical in the sense that it has been successfully used in practice, see~\cite{BCFNW18}.
The variation for double covers of $\PP^2$ ramified above singular sextics  has been successfully used in~\cite{FvS18}.
An implementation of the algorithm in \texttt{MAGMA} is currently work in progress.

The material in this paper is mostly not original:
it is rather the  rearrangement of existing results and techniques in a coherent manner.

\begin{algorithm}\label{a:Main}
The algorithm takes the field $k$ and the surface $X$ defined by the equation
$$
w^2=f(x,y,z)
$$
as input and, 
after performing the following four steps,
it returns the Gram matrix of the geometric Picard lattice of $X$.
\begin{enumerate}[I.]
\item Give an upper bound for $\rho (\Xbar )$.
\item Find explicit divisors on $\Xbar$.
\item Compute the sublattice $\Lambda$ of $\Pic \Xbar$ generated by the divisors found in Step II.
 \item Check whether $\Lambda = \Pic \Xbar$.
\end{enumerate}
\end{algorithm}

\begin{remark}
We want to stress out that the algorithm could fail in providing the Picard lattice of the given surface.
The critical point of the algorithm is usually Step II.
Step IV can in principle also present some fatal obstructions.
Step I and III can be computationally expensive,
but usually do not represent major obstacles.
More details on these issues can be found in Section~\ref{s:Issues}.
\end{remark}

\begin{remark}\label{r:Qn}
As remarked by Alice Garbagnati, 
Algorithm~\ref{a:Main} can also be applied to K3 surfaces given by a quartic in $\PP^3$ admitting a node, which form an $18$-dimensional space.
To see this, let $X\subset \PP^3$ be such a surface and,
without loss of generality,
assume that the node lies at the point $P=(0:0:0:1)$.
If $s,t,u,v$ are the coordinates of $\PP^3$,
then $X$ is defined by the equation $f=0$ with $f$ of the form
$$
f=f_4(s,t,u)+f_3(s,t,u)v+f_2(s,t,u)v^2,
$$
with $f_i$ a homogeneous polynomial of degree $i$, for $i=2,3,4$.
The projection from $P$ gives a birational map from $X$ to 
the double cover of $\PP^2$ ramified above the sextic given by
$f_3^2-4f_2f_4=0$.
More precisely, 
if $x,y,z,w$ are the coordinates of the weighted projective space $\PP(1,1,1,3)$,
and $Y$ is the double sextic given by $w^2=f_3^2(x,y,z)-4f_2(x,y,z)f_4(x,y,z)$,
then the projection from $P$ gives a birational map from $X$ to $Y$;
this map is only defined up to a sign.
Obviously, Algorithm~\ref{a:Main} can be applied to $Y$.
In this view, notice that the plane conic $C\colon f_2=0$ is everywhere tangent to the branch locus of $Y$ (cf. Example~\ref{e:sixtangent}).
\end{remark}

\begin{remark}
According to some definitions (for example, Knuth's),
an algorithm is bound to finish in finite time.
Therefore, in this respect, Algorithm~\ref{a:Main} would not deserve its title.
Nevertheless, 
notice that in practice one can put time constrains to the algorithm, 
forcing it to terminate after a given amount of time or steps.
Also in the case of a forced arrest, 
the algorithm is still  able to return a lattice as output,
together with a message indicating that the returned lattice needs not to be the full geometric Picard lattice.
Therefore Algorithm~\ref{a:Main}, in its practical applications, also satisfies those definitions of algorithm requiring finiteness 
(and in particular Knuth's definition).
\end{remark}

In Section~\ref{s:Def}  we introduce basic definitions and facts about K3 surfaces in general and their Picard lattice in particular.
The specifications of the input and output of the algorithm are given in Section~\ref{s:InOut}.
In Sections~\ref{s:Bound}--\ref{s:Check} we illustrate the four steps of the algorithm.
Finally, in Section~\ref{s:Issues}, 
we explain how the algorithm can fail in providing the full  geometric Picard lattice of a given surface.

\section*{Acknowledgements}
Thanks are due to Ronald van Luijk for the numerous discussions on this topic,
and Duco van Straten for the suggestions on the part regarding singular surfaces.
The author would also like to thank Edgar Costa, Alice Garbagnati, and Yuri Tschinkel for useful comments.
This paper has been  written during the author' stay in Mainz, 
supported by SFB/TRR 45.

\section{Basic definitions and facts}\label{s:Def}

In this section we define the terms appearing in the title
and some other objects that will be often used in the remainder of the paper.
We will then introduce some results about K3 surfaces and their Picard lattice that are already well known and that we will freely use in the remainder of the paper.

Let $k$ be any field.
In this paper, by \textit{surface} over $k$ we mean 
a separated, geometrically integral scheme of finite type and dimension two over $k$.
Let $X$ be a surface over $k$.
We say that $X$ is a \textit{K3 surface} if it is smooth, projective and such that 
$$
\HH^1(X,\cO_X)=0 \; \text{ and } \; K_X\sim_{\text{lin}} 0,
$$
where $\sim_{\text{lin}}$ denotes the linear equivalence of divisors;
the last two conditions are equivalent to asking $X$ to be
simply connected and such that
$$ \omega_X:=\Omega_X^2 \cong \cO_X.$$

Let $X$ be a K3 surface over a field $k$.
We define the \textit{Picard group} of $X$, 
denoted by $\Pic X$, 
to be the quotient of the divisor group $\Div X$ modulo linear equivalence.
Equivalently, one could define $\Pic X$ as the group of isomorphism classes of invertible sheaves on $X$; 
finally, it turns out that  $\Pic X \cong \HH^1(X,\cO^*_X)$.

Let $\kbar$ be an algebraic closure of the field $k$;
we denote by $\Xbar:=X\times_k \kbar$ the base-change of $X$ to $\kbar$. 
We define the \textit{geometric Picard group} of $X$ to be
$\Pic \Xbar = \Div \Xbar /\sim_\text{lin}$.

We define a \textit{lattice} to be a finitely generated, torsion-free
$\ZZ$-module endowed with a non-degenerate, symmetric bilinear form.

The intersection pairing on $\Div X$ induces a pairing on $\Pic X$,
also called intersection pairing.
We will see in the next section that $\Pic X$ endowed with the intersection pairing turns out to be a lattice, called the \textit{Picard lattice} of $X$.

We define the \textit{Picard number} of $X$, 
denoted by $\rho(X)$, 
to be the rank of the Picard lattice $\Pic X$ of $X$;
we define the \textit{geometric Picard number} of $X$,
denoted by $\rho (\Xbar )$, 
to be the rank of the \textit{geometric Picard lattice} $\Pic \Xbar$ of $X$.

A \textit{polarized} K3 surface is a K3 surface $X$ together with an ample divisor $L$;
the \textit{degree} of a polarized K3 surface is the self intersection $L^2$ of the ample divisor $L$.
The \textit{genus} of a polarized K3 surface is defined to be the genus of a curve inside the linear system $|L|$.

\begin{example}
Smooth quartics in $\PP^3$ are  K3 surfaces
 of degree $4$ and genus $3$.
\end{example}
\begin{example}
Double covers  $\pi\colon X\to \PP^2$ ramified above a smooth sextic curve $B$ are K3 surfaces of degree $2$ and genus $2$.
They are also called double sextics.
If $B$ is defined by the homogeneous sextic polynomial $f$,
then $X$ can be defined by the equation $w^2=f(x,y,z)$ inside the weighted projective space $\PP (1,1,1,3)$ with coordinates $x,y,z,w$.
These are the surface we will focus on in this paper.
\end{example}

Finally, we define the \textit{$K3$-lattice} $\Lambda_{K3}$ to be the lattice 
$U^{\oplus 3} \oplus E_8(-1)^{\oplus 2}$,
where $U$ is the hyperbolic lattice given by $\ZZ^2$ with intersection matrix 
$\bigl(\begin{smallmatrix}
0&1 \\ 1&0
\end{smallmatrix} \bigr)$,
and $E_8(-1)$ is the lattice associated to $\ZZ^8$ 
and having as intersection matrix the matrix associated to the Dynkin diagram $E_8$ multiplied by $-1$.

The following facts are well known, and we refer to~\cite{Huy16} for the proofs.
\begin{proposition}\label{p:basicsPic}
Let $X$ be a K3 surface defined over $\CC$.
Keeping the notation introduced before, the following statements hold.
\begin{enumerate}
\item $\Pic X$ is a finitely generated, torsion free abelian group.
\item The intersection pairing on $\Pic X$ is even, non-degenerate, symmetric and bilinear.
\item The cohomology group $\HH^2(X,\ZZ)$ endowed with the cup product is a lattice and it is isometric to
$\Lambda_{K3}$.
\item There is an injection of lattices $\Pic X \hookrightarrow \HH^2(X,\ZZ)$.
\item In particular, $\Pic X$  endowed with the intersection pairing is a lattice of rank $\rho (X)\leq 22$.
\item $\Pic X$ is an even lattice of signature $(1,\rho (X)-1)$.
\item The injection $\Pic X \hookrightarrow H^2(X,\ZZ)$ in (4)
induces an isometry $\Pic X \cong H^{1,1}(X) \cap H^2(X,\ZZ )$.
In particular, $\rho(X)\leq 20$.
\end{enumerate}
\end{proposition}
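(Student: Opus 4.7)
The plan is to derive (1)--(7) from three classical ingredients: the exponential sequence of sheaves, the Hodge-theoretic structure on $\HH^2$ of a K3 surface, and the Lefschetz $(1,1)$ theorem. First I would feed the exponential sequence
$$
0 \to \ZZ \to \cO_X \to \cO_X^* \to 0
$$
into cohomology and use the defining vanishing $\HH^1(X,\cO_X)=0$: the connecting map yields an injection $c_1 \colon \Pic X = \HH^1(X,\cO_X^*) \hookrightarrow \HH^2(X,\ZZ)$, which is (4). Once $\HH^2(X,\ZZ)$ has been identified as a free abelian group of rank $22$ in (3), statements (1) and (5) follow immediately.

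For (3) I would assemble the standard topological and Hodge-theoretic package for a complex K3 surface. Combining $\HH^1(X,\cO_X)=0$, $K_X \sim 0$, Serre duality, and Noether's formula gives the Hodge diamond, and in particular $h^{1,1}=20$ and $b_2 = 22$. Simple-connectedness of $X$ (which, since all complex K3 surfaces are diffeomorphic, reduces to checking one example such as a smooth quartic in $\PP^3$), combined with universal coefficients, forces $\HH^2(X,\ZZ)$ to be torsion-free. The cup product is then unimodular by Poincar\'e duality, of signature $(3,19)$ by the index theorem, and even because Wu's formula gives $x \cdot x \equiv x \cdot w_2(X) \pmod 2$ with $w_2(X) = c_1(X) \bmod 2 = 0$. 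The classification of indefinite even unimodular lattices then forces $\HH^2(X,\ZZ)\cong \Lambda_{K3}$.

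Items (2), (6), and (7) then follow by combining (3) with the Hodge decomposition. The cycle class map intertwines the intersection pairing on $\Pic X$ with the cup product on $\HH^2$, so the symmetry, bilinearity, and evenness asserted in (2) are inherited from (3). For (7) I would invoke the Lefschetz $(1,1)$ theorem: the image of $c_1$ lies in $H^{1,1}(X) \cap \HH^2(X,\ZZ)$, and under the vanishing $\HH^1(X,\cO_X)=0$ it actually surjects, giving the isometry; since $h^{1,1}=20$ this forces $\rho(X) \leq 20$. For (6) I would apply the Hodge index theorem on the compact K\"ahler surface $X$: the intersection form on $H^{1,1}(X,\mathbb{R})$ has signature $(1,19)$, and restricting to $\Pic X \otimes \mathbb{R}$, which contains an ample class of strictly positive self-intersection, leaves signature $(1,\rho(X)-1)$. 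This simultaneously yields non-degeneracy, completing (2).

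The one step with genuine content is (3): the identification $\HH^2(X,\ZZ)\cong \Lambda_{K3}$ packages a substantial amount of classical input (Hodge diamond of a K3 surface, simple-connectedness via diffeomorphism type, and the classification of indefinite even unimodular forms). Everything else is essentially formal once that is in hand, which is why the proposition is simply pointed to~\cite{Huy16}.
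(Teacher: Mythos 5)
Your proposal is mathematically sound, but it takes a genuinely different route from the paper in presentation: the paper's ``proof'' is purely a pointer, citing \cite{Huy16} item by item (Propositions 1.2.1, 1.2.4, 1.3.5 and Sections 1.2.1, 1.3.2, 1.3.3) with no argument given, whereas you reconstitute the classical chain of results that lies behind those citations --- the exponential sequence plus $\HH^1(X,\cO_X)=0$ for the injection $c_1\colon \Pic X\hookrightarrow \HH^2(X,\ZZ)$, Noether's formula and Serre duality for the Hodge diamond, Kodaira's theorem that all complex K3 surfaces are deformation equivalent (hence diffeomorphic) to reduce simple-connectedness to a quartic and kill torsion in $\HH^2$, Poincar\'e duality, the index theorem, and Wu's formula $x\cdot x\equiv x\cdot w_2(X)\bmod 2$ with $w_2=c_1\bmod 2=0$ feeding into the classification of indefinite even unimodular lattices to get $\HH^2(X,\ZZ)\cong\Lambda_{K3}$, and finally Lefschetz $(1,1)$ and the Hodge index theorem for (6) and (7). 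This is in substance exactly the proof Huybrechts gives, so your route buys self-containedness at the cost of length, while the paper's buys brevity; neither adds content the other lacks. Two small imprecisions worth fixing: in (7), the vanishing $\HH^1(X,\cO_X)=0$ is what gives \emph{injectivity} of $c_1$ (i.e.\ $\Pic X = \NS(X)$), while surjectivity onto $H^{1,1}(X)\cap\HH^2(X,\ZZ)$ is the content of the Lefschetz $(1,1)$ theorem itself, already visible from the exponential sequence since the composite $\HH^2(X,\ZZ)\to\HH^2(X,\cO_X)$ is projection onto the $(0,2)$-part; and in your ordering, (5) is asserted before non-degeneracy of the pairing on $\Pic X$ is available, so strictly one should state (5) only for rank and finite generation and defer the lattice claim until the Hodge index argument closes (2) and (6), since restricting a non-degenerate form to a subspace is not automatically non-degenerate --- your argument splitting off an ample class $h$ with $h^2>0$ and using negative definiteness of $h^\perp$ in $H^{1,1}(X,\RR)$ is the correct repair, so this is a matter of arrangement rather than a gap.
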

\begin{proof}
\begin{enumerate}
\item It follows from~\cite[Proposition 1.2.1]{Huy16} and~\cite[Proposition 1.2.4]{Huy16}.
\item Bilinearity follows immediately from the definition of the pairing (cf.~\cite[1.2.1]{Huy16});
for the other properties see~\cite[Proposition 1.2.4]{Huy16}.
\item \cite[Proposition.1.3.5]{Huy16}.
\item \cite[1.3.2]{Huy16}.
\item It follows directly from the previous points.
\item \cite[Proposition 1.2.4]{Huy16}.
\item \cite[1.3.3]{Huy16}.
\end{enumerate}
\end{proof}

\section{Input \& Output}\label{s:InOut}
The goal of the algorithm is to compute $\Pic \Xbar$, that is, 
compute its Gram matrix.
In order to run the algorithm, we need some assumptions on the input.
We assume that $k$ is either a number a field or a function field over $\QQ$ with finitely many variables.
We also assume that $k$ is explicitly given.
The examples to keep in mind are number fields with a given generator, and function fields with a finite number of given variables.

\begin{Input}
$(k, f, \text{ (optional) } G, \text{ (optional) } T)$.

We assume that $X$  is a K3 surface over $k$ given as the hypersurface
$$
w^2=f(x,y,z)
$$
inside the weighted projective space $\PP=\PP(1,1,1,3)$ over $k$, 
with coordinates $x,y,z,w$ of weights $1,1,1,3$, respectively.
We assume that $f$ is a homogeneous polynomial of degree $6$ with coefficients in $k$ defining a smooth sextic curve in $\PP^2_k$.

Optionally, one can provide a subgroup $G\leq \Aut (\Xbar )$ of automorphisms of $\Xbar$ as part of the input.
These automorphisms have to be given in the coordinates of $\PP$, that is, as projective transformations of $\PP$.
If no specification for $G$ is given,
then $G$ is considered to be the trivial group.  

In order to make sure that the algorithm terminates,
one can provide a time bound $T$ for the running time of the algorithm.
If no specification for $T$ is given, 
then $T$ is considered to be $\infty$.
\end{Input}

\begin{Output}
$((M), \texttt{true/false}) $. 

The output of the algorithm consists of a square matrix $M$
representing  the intersection matrix of the lattice generated by the divisors found by the algorithm, together with a boolean expression: \texttt{true} or \texttt{false}.

If the boolean statement is \texttt{true},
then $M$ is the Gram matrix of the full geometric Picard lattice of the surface given as input.
Otherwise, the algorithm only produced a sublattice of $\Pic \Xbar$,
with no guarantee it is the full geometric Picard lattice. 
\end{Output}

\begin{remark}
We have seen that if the algorithm terminates, then
the output  is a square matrix representing the Gram matrix of the sublattice $\Lambda\subseteq \Pic \Xbar$ computed by the algorithm.
For an easier reading of an output,
sometimes it might be desirable to write $\Lambda$ as direct sum of known lattices,
that is,
as a tuple of the form $((M_1,m_1),...,(M_t,m_t))$, with $M_i$ either equal to $U$ or a lattice of $ADE$--type  such that 
$M$ is isometric to the lattice
$$
M_1(m_1) \oplus ... \oplus M_t (m_t).
$$
This goal can be achieved when, for instance,
$\rk\Lambda \geq \ell (A_\Lambda +2)$,
where $\ell (A_\Lambda +2)$ denotes the minimal number of generators of the discriminant group $A_\Lambda$ of $\Lambda$.
In this case, one can then look for tuples $((M_1,m_1),...,(M_t,m_t))$ such that the lattice $M_1(m_1) \oplus ... \oplus M_t (m_t)$ has the same rank, signature, parity, and discriminant group as $\Lambda$.
Then the two lattices are isomorphic, as showed by \cite[Corollary 1.13.3]{Nik80}.
\end{remark}

\begin{remark}\label{r:Sing}
If $f$ defines a singular sextic curve $B$ in the plane,
then $X$ is not a K3 surface as it fails to be smooth.
Nevertheless, if $B$ has only du Val singularities, that is,
$ADE$-singularities,
then the desingularisation $\tilde{X}$ of $X$ is a K3 surface,
and this method, with some small changes, can be used to compute its geometric Picard lattice.
In fact, the exceptional divisors coming from the desingularisation contribute to the number of divisors we can find,
helping us in the generation of a big sublattice of $\Pic \overline{\tilde{X}}$.
Unfortunately, in this case computing the intersection numbers of the divisors might reveal itself more cumbersome.
See more details on this case in Section~\ref{s:Intersection};
see~\cite{FvS18} for a concrete example.
\end{remark}

\section{Step I: an upper bound for $\rho (\Xbar)$}\label{s:Bound}
Recall that the input is given by a K3 surface $X$ defined over 
a field $k$ with characteristic $0$.
Then, as shown by  Proposition~\ref{p:basicsPic}, 
a bound for the (geometric) Picard number of $X$ is given by $20$.
Of course in general this bound is far from being optimal.
In order to give a better bound, 
we know of two methods.

\subsection{van Luijk's method}\label{ss:vL} If $X$ is defined over a number field, then one can use van Luijk's method. 
This method is explained (and used) in~\cite{vL05} and~\cite{vL07}.
A refinement of van Luijk's method is presented in~\cite{EJ11}.
In general, the bound provided by this method does not need to be sharp.
In~\cite{Cha14}, Charles investigates the sharpness of this bound (cf.~\ref{ss:NonSharp}).
Notice that thanks to \cite[Proposition 3.6]{MP12}, van Luijk's method can be applied also to K3 surfaces defined over function fields.
\subsection{Dolgachev's bound} If $X$ is defined over a function field, then one can look at $X$ as the generic member of a family $\mathcal{X}$ of K3 surface;
let $d$ denote the dimension of $\mathcal{X}$ as family of $K3$ surfaces.
If the family is not isotrivial, then from~\cite[Corollary 3.2]{Dol96} it follows that the geometric Picard number of $X$ can be at most $20-d$.

\vspace*{8pt}
In what follows, we will denote by $\tau$ the bound obtained by any of the previous methods.

\section{Step II: finding divisors on $\Xbar$}\label{s:Divisors}
In this section we are going to present some tools to find divisors on a surface given by a double cover of $\PP^2$ ramified above a curve.
The case that concerns us the most is obviously when this curve is a (smooth) sextic.

\subsection{Double cover structure}\label{ss:doublesextic}

The main tool is given by the following proposition.
This result seems to be well known among the experts,
but we failed to find any early reference.
A proof of a slightly different statement by Ronald van Luijk and the author can be found in \cite[Section 5]{FvL15} and \cite[Subsection 1.2.3]{Fes16}.
For sake of clarity we will restate here the result in a more compact way.
The proof follows the proofs given in the above references.

In what follows, except when explicitly stated,
we will always assume that $X$ is a K3 surface 
over a field $k$ of characteristic $0$
given as a double cover of $\PP^2$ ramified above a smooth sextic curve $B$.
More precisely, we will assume that $X$ is given as in Section~\ref{s:InOut}, i.e.,
as
$$
X\colon w^2 = f_6(x,y,z).
$$

\begin{proposition}\label{p:Split}
Let $\pi\colon S\to \PP^2$ be a double cover of $\PP^2$ ramified above the curve $B\subset \PP^2$.
Let $C$ be a plane curve with no components in common with $B$ and with genus $g(C)=0$.
Assume the following two conditions hold:
\begin{enumerate}[(i)]
\item $C$ intersects $B$ with even multiplicity everywhere;
\item $C$ does not intersect $B$  in its singular points, i.e., 
$C^{\text{sing}}\cap B = \emptyset$.
\end{enumerate} 
Then the pre-image $\pi^{-1}(C)$ of $C$ on $S$ splits into two components.
\end{proposition}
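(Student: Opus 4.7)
The plan is to show that the cover $\pi^{-1}(C) \to C$ becomes trivial after passing to function fields, forcing $\pi^{-1}(C)$ to split into two components.

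First I would rephrase the equation $w^2 = f$ in terms of rational functions on $C$. Writing $\deg B = 2m$ and choosing a linear form $s$ on $\PP^2$ not vanishing identically on $C$, the quantity $h := f/s^{2m}$ is a rational function whose restriction $h|_C$ lies in $\kbar(C)^{\times}$ with divisor $(B \cap C) - 2m (s|_C)$, and the defining equation of the double cover becomes $u^2 = h|_C$ with $u := w/s^m$. I would then pass to the normalization $\nu \colon \tilde{C} \to C$; since $g(C) = 0$, we have $\tilde{C} \cong \PP^1$, and $\nu$ induces an isomorphism $\kbar(C) \cong \kbar(\tilde{C})$. Condition (ii) ensures $B \cap C$ is supported in the smooth locus of $C$, so $\nu^*(B \cap C)$ has the same multiplicities as $B \cap C$ point by point on $\tilde{C}$; condition (i) then guarantees $\nu^*(B \cap C) = 2E$ for some effective divisor $E$ on $\tilde{C}$. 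Since $2m (s|_C)$ visibly has all even multiplicities, the divisor of $\nu^*(h|_C)$ on $\tilde{C} \cong \PP^1$ is everywhere even.

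Because $\Pic(\PP^1) = \ZZ$ has no $2$-torsion, every rational function on $\PP^1$ with even divisor is a square; hence $\nu^*(h|_C) = g^2$ for some $g \in \kbar(\tilde{C})^{\times} = \kbar(C)^{\times}$. Thus $u^2 - h|_C$ factors in $\kbar(C)[u]$ as $(u - g)(u + g)$, the generic fiber of $\pi^{-1}(C) \to C$ consists of two $\kbar(C)$-points, and $\pi^{-1}(C)$ has exactly two irreducible components, each mapping birationally to $C$.

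The main subtle point is the use of condition (ii): without it, intersection multiplicities of $B \cap C$ at singular points of $C$ could be distributed unevenly among the analytic branches upon pulling back to $\tilde{C}$, destroying the evenness from condition (i) and breaking the conclusion that $\nu^*(h|_C)$ is a square on $\PP^1$.
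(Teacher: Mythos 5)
Your proof is correct, but it takes a genuinely different route from the paper's. The paper argues via covering spaces: citing a lemma from \cite{Fes16}, it identifies the branch locus of the induced double cover $\tilde{D}\to\tilde{C}$ of normalisations with the set $b(\tilde{C})$ of points where the intersection multiplicity with $B$ is odd; conditions (i) and (ii) force $b(\tilde{C})=b(C)=\emptyset$, and since the only unramified double cover of $\PP^1$ is the disjoint union of two copies of $\PP^1$, the curve $\tilde{D}$, hence $D$, splits. You instead run the Kummer-theoretic version of the same dichotomy: the cover over $C$ splits if and only if $h|_C$ is a square in $\kbar(C)$, and you verify this by showing that $\nu^{*}(h|_C)$ has everywhere-even divisor on $\tilde{C}\cong\PP^1$ and that a rational function on $\PP^1$ with even divisor is a square (its half-divisor has degree zero, hence is principal, as $\Pic\PP^1$ has no $2$-torsion). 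The two proofs use the normalisation and condition (ii) in exactly the same way --- to transfer evenness of multiplicities from $C$ to $\tilde{C}$ --- and the two key facts invoked (triviality of unramified double covers of $\PP^1$ versus absence of $2$-torsion in $\Pic\PP^1$) are equivalent faces of the same statement. What your version buys: it is self-contained, avoiding the cited branch-locus lemma and the classification of unramified covers, and it is constructive --- it exhibits the square root $g$ and hence the explicit equations $u=\pm g$ of the two components, which is precisely the ``perfect square'' test the paper applies in practice in Example~\ref{e:tritangent} and Example~\ref{e:sixtangent}. What the paper's version buys: brevity, and a description of exactly where the induced cover ramifies, which remains usable when $b(\tilde{C})\neq\emptyset$. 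One point worth making explicit in your write-up: the factorisation $(u-g)(u+g)$ gives two \emph{distinct} components because $g\neq -g$, which uses $g\neq 0$ (guaranteed since $C$ shares no component with $B$) and the standing characteristic-zero hypothesis.
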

\begin{proof}
Define $b(C):=\{T\in C(\kbar )\; :\; \mu_T (C,B) \equiv 1 \bmod 2 \}$,
where $\mu_T(C,B)$ is the intersection multiplicity of $C$ and $B$ at $T$.
Let $\theta\colon \tilde{C}\to C$ be the normalisation of $C$ and define $b(\tilde{C})$ analogously to $b(C)$.

Let $D=\pi^{-1}(C)$ be the preimage of $C$ on $X$.
Then $\pi$ induces a 2:1-morphism $\tilde{\pi}\colon \tilde{D}\to \tilde{C}$ whose branch locus is $b(\tilde{C})$ (cf.~\cite[Lemma 1.2.25]{Fes16}).

If $b(\tilde{C})=\emptyset$ then, as $g(\tilde{C})=g(C)=0$, the preimage $\tilde{D}$ is isomorphic to an unramified double cover of $\PP^1$:
the only such cover is given by a disjoint union of two copies of $\PP^1$.
This  means that $\tilde{D}$ is given by two disjoint copies of $\tilde{C}$ and hence $D$ is given by the union (not disjoint) of two components isomorphic of $C$.

We related the splitting of $D$ to the set $b(\tilde{C})$.
Now we want to relate it to the set $b(C)$.
By assumption (ii), we have that $C^{\text{sing}}\cap B = \emptyset$  and then  $b(\tilde{C})=b(C)$,
as $\tilde{C}$ and $C$ are isomorphic outside $C^{\text{sing}}$,
and all the intersection points of $C$ and $B$ lie outside that set,
and therefore the intersection multiplicity is preserved by $\theta$.
Then assumption (i) implies that $\emptyset= b(C) = b(\tilde{C})$ and the statement follows.

%In general, if we assume that all the singular points of $C$ lying on $B$ are ordinary, then
%$$
%\#b(\tilde{C})=\sum_{s\in C\cap B(\kbar)} c_s (C),
%$$
%where 
%$$
%c_s(C):=
%\begin{cases}
%m_s &\text{ if } m_s\equiv \mu_s(C,B) \bmod 2 \\
%m_s-1 &\text{ if } m_s\not\equiv \mu_s(C,B) \bmod 2 
%\end{cases}
%$$
%with $m_s$ the multiplicity of $s$ on $C$ (cf.~\cite[Lemma 1.2.29]{Fes16}).
%Condition {(ii)} implies that this assumption is trivially satisfied.
%As, by initial hypothesis, for every $s\in C\cap B (\kbar)$ we have $m_s=1$ and $\mu_s (C,B)\equiv 0 \bmod 2$,
%from the formula above it immediately follows that $\# b(\tilde{C})=0$,
%proving the statement.
\end{proof}

In what follows we use $[H]$ to denote the hyperplane class of $X$ inside $\Pic X$.

\begin{corollary}
Let $C\subset \PP^2$ be a curve satisfying the hypothesis of Proposition~\ref{p:Split}.
Then the pull back $D$ on $X$ of $C$ splits into two components that are not linearly equivalent to any multiple of $[H]$,
that is, they do not lie in the span $\langle[H]\rangle_\ZZ \subset \Pic \Xbar$. 
\end{corollary}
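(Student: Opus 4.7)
The plan is to argue by contradiction: assume $D_1 \sim n[H]$ for some integer $n$, and set $d = \deg C$. By Proposition~\ref{p:Split} and the local computation in its proof, $\pi^*(C) = D_1 + D_2$ with each $D_i$ isomorphic to $C$ as a scheme; in particular $p_a(D_i) = p_a(C) = (d-1)(d-2)/2$, using $g(C) = 0$.

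Three identities drive the argument. First, $D_1 + D_2 \sim d[H]$, since $\pi^*$ sends the hyperplane class on $\PP^2$ to $[H]$ and $C$ has degree $d$. Second, the double-cover involution $\iota$ fixes $[H]$ (a pullback from $\PP^2$) and swaps $D_1$ and $D_2$; so $D_1 \sim n[H]$ forces $D_2 = \iota^* D_1 \sim n[H]$, whence $d = 2n$, which in particular requires $d$ even and $n = d/2$. Third, adjunction on the K3 surface (using $K_X \sim 0$) gives $D_i^2 = 2 p_a(D_i) - 2 = d(d-3)$. On the other hand, $D_1 \sim (d/2)[H]$ yields $D_1^2 = 2(d/2)^2 [H]^2/2 \cdot 2 = d^2/2$. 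Equating the two expressions for $D_1^2$ forces $d(d-6) = 0$, so $d = 6$.

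This closes the argument for every $d \neq 6$. The hard part will be the case $d = 6$: here $n = 3$ and both computations give $D_1^2 = 18$, so the numerical obstruction disappears. To finish, I would use the explicit description of $H^0(X, \cO_X(3[H]))$, whose sections take the form $g_3(x,y,z) + cw$ with $g_3 \in k[x,y,z]_3$ and $c \in k$; then $D_1 \sim 3[H]$ would force $C$ to be a rational sextic of the very special form $V(g_3^2 - c^2 f_6)$, and I would try to rule this out using irreducibility of $C$, the constraint $g(C) = 0$, and the requirement from hypothesis (ii) that all $10$ singularities of $C$ lie off $B$. I expect this case to be the delicate part of the proof.
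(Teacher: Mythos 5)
Your proposal is incomplete exactly where you flag it: the case $d=6$ is left open, and it cannot be closed, because under the reading of Proposition~\ref{p:Split} that you adopt the statement actually fails there. The sextics you hoped to rule out, $C\colon g_3^2-c^2f_6=0$, are not excluded by any of the hypotheses, and they genuinely occur: fix a generic $10$-nodal (hence rational, irreducible) sextic $h_6=0$ and a generic cubic $g_3$, and \emph{define} $f_6:=g_3^2-h_6$. Generically $B\colon f_6=0$ is smooth, the nodes of $C\colon h_6=0$ avoid $B$, and along $B$ one has $h_6=g_3^2$, so $C$ meets $B$ everywhere with even multiplicity; thus all hypotheses of Proposition~\ref{p:Split} hold. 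The preimage of $C$ splits into the loci $w=\pm g_3$, i.e., the zero divisors of the weight-$3$ sections $w\mp g_3$ of $\cO_X(3)$, so each component satisfies $D_i\sim 3[H]$ --- precisely the conclusion you were trying to forbid. Indeed your own numerics force this: for $d=6$ and $p_a(D_1)=10$ the class $D_1-3H$ has square zero and is orthogonal to $H$, and since $H^\perp\subset\Pic\Xbar$ is negative definite this \emph{implies} $D_1\sim 3H$. So irreducibility, rationality, and the position of the singularities give no contradiction, and no argument along your sketched lines can succeed.

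The paper's own proof is entirely different and has no case distinction: it writes the Gram matrix of $\langle H,D_i\rangle$ as $\bigl(\begin{smallmatrix}2&d\\d&-2\end{smallmatrix}\bigr)$ and notes that its determinant $-4-d^2$ never vanishes. The divergence from your argument lies in the value of $D_i^2$: the paper takes $D_i^2=-2$, which presupposes that $D_i$ is a \emph{smooth} rational curve --- automatic when $C$ is smooth, i.e., a line or a conic --- whereas you take $p_a(D_i)=p_a(C)=(d-1)(d-2)/2$, which is what the proof of Proposition~\ref{p:Split} itself asserts (``components isomorphic to $C$''). Your bookkeeping with arithmetic genus is the correct one for singular $C$ (in general $p_a(D_i)$ can lie anywhere between $0$ and $p_a(C)$, depending on how the nodes of $C$ lift, and independence of $H$ and $D_i$ fails exactly when $4(p_a(D_i)-1)=d^2$, i.e., only at $d=6$, $p_a(D_i)=10$). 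So your proof is correct and complete for every $C$ with $d\neq 6$, and also for smooth $C$, where $D_i^2=-2$ makes your final identity $-2=d^2/2$ impossible for all $d$; but your ``delicate case'' is not a gap you failed to close --- it is an actual counterexample to the corollary as stated for singular rational sextics, a case the paper's one-line determinant argument silently excludes by assuming $D_i^2=-2$.
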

\begin{proof}
Let $D_1, D_2$ be the two components into which $D$ splits,
i.e., $D=D_1+D_2$,
and let $d$ be the degree of $C$.
Then $H$ and $D_i$, for $i=1,2$, generate a lattice with the following Gram matrix.
$$A:=
\begin{pmatrix}
2 & d \\
d & -2
\end{pmatrix}
$$
Notice that $\det A= -4-d^2\neq 0$,
proving that $D_i$ and $H$ are linearly independent.
\end{proof}

\begin{example}[Tritangent lines]\label{e:tritangent}
It is always possible to quickly check whether there exist \textit{lines} that are everywhere tangent to $C$.
Let $\ell\subset \PP^2$  be a line,
then it is defined by an equation of the form $ax+by+cz$.
Assume $a\neq 0$, then we can write
$\ell\colon x=b'y+c'z$.
Notice that $\ell$ is obviously smooth and has genus zero.
The condition that $\ell$ intersects $C\colon f_6 (x,y,z)=0$ with even multiplicity everywhere is equivalent to 
the condition that $f(b'y+c'z,y,z)$ is the square of a homogenous cubic polynomial in $\kbar [y,z]$.
To make computations easier one can also assume $z=1$
and ask that  $f(b'y+c',y,1)$ is a square in $\kbar [y]$.

In this way we find all the tritangent lines  of the form $\ell\colon x=b'y+c'z$.
Of course, lines with $a=0$ might be tritangent too.
In order to check this, 
first assume $b\neq 0$, then write
$y=c''z$ and 
check whether $f_6(x,c''z,z)$ is a square in $\kbar [x,z]$.

Finally, we consider the line $z=0$ (obtained by assuming $a=b=0$),
and check whether $f_6(x,y,0)$ is a  square in $\kbar [x,y]$.

For more about this case, see also~\cite[Section 2]{EJ08b}.
\end{example}

\begin{example}[sixtangent conics]\label{e:sixtangent}
The procedure is analogous to the case of tritangent lines (cf. Example~\ref{e:tritangent}).
We present a refinement of the algorithm exposed in~\cite[Section 2]{EJ08b}.
Let $C$ be a smooth conic of $\PP^2$ with equation
$$
a_0+a_1x+a_2y+a_3xy+a_4x^2+a_5y^2=0,
$$
and assume that $P=(x_0,y_0)$ is a point of $C$.
One can then use $P$ to parametrise $C$, 
obtaining the map $\PP^1\to C$ defined as follows:
$$
t \mapsto (X_t: Y_t: Z_t),
$$
with 
\begin{align*}
X_t &:= a_5x_0t^2-(a_2+2a_5y_0)t-a_1-a_3y_0-a_4x_0,\\
Y_t &:= t(X_t-x_0Z_t)+y_0Z_t,\\
Z_t &:= a_5t^2+a_3t+a_4.
\end{align*}
Recall that the branch locus of $X\to \PP^2$ is the sextic $B\colon f_6(x,y,z)=0$.
Then $C$ is tangent everywhere to $B$ if and only if 
$f_6(X_t,Y_t,Z_t)$ is a square in $\kbar [t]$.
\end{example}
\begin{remark}
The condition that $f_6(X_t,Y_t,Z_t)$ is a square in $\kbar [t]$ consists  of asking that $f_6(X_t,Y_t,Z_t)=(b_0t^6+...+b_6)$,
for some $b_0,...,b_6\in \kbar$.
This means that finding sixtangent conics implies finding a Gr\"obner basis of an ideal of a polynomial ring of 14 variables over $k$ generated by seven elements.
This is the first computationally very expensive step.
\end{remark}

\begin{example}[Curves of degree $d$ with a singular point of multiplicity $d-1$]
One can generalise Example~\ref{e:sixtangent} to any curve $C$ of degree $d$ with a singular point $P$ of multiplicity $d-1$.
In fact $C$ has genus $0$ and the projection from $P$ provides a parametrisation.
Asking $C$ to be everywhere tangent to $B$, i.e., $3d$--tangent to $B$, again boils down to checking whether the polynomial
$f_6(X_t,Y_t,Z_t)$ is a perfect square inside $\kbar [t]$.

Although on paper this is conceptually easy,
from a computational point of view this approach becomes quickly infeasible:
in general, the computations need to be done on a polynomial ring with $5d+4$ variables.
\end{example}

\begin{example}[Curves of degree $d\geq 4$ and genus $0$]
Curves of degree $d$ with a singular point of multiplicity $d-1$ are only one example of curves of degree $d$ and genus $0$.
In general, a curve of degree $d$ and genus $0$ can have $r$ singular points $P_1,...,P_r$ of multiplicity $m_1,...,m_r$ respectively such that
$$
\frac{(d-1)(d-2)}{2}=\sum_{i=1}^r\frac{m_i(m_i-1)}{2}.
$$
This means that in looking for divisors,
one can ultimately look for curves of arbitrary high degree with compatibly many singular points, provided these singular points do not lie on $B$.
On the one hand, this gives an enormous space of curves to look into and hence many possibilities to find such curves;
on the other hand, this space is so big that any extensive brute-force search is infeasible.
\end{example}

\subsection{Del Pezzo surface of degree $1$}\label{ss:doubledP}

Recall that $X\to \PP^2$ is the double cover of the plane ramified above the smooth sextic curve $B$ defined by  $f(x,y,z)=0$.
Sometimes, a double cover of the projective plane ramified along a smooth sextic can have the additional structure  of double cover of a del Pezzo surface of degree~$1$.
This structure is encoded by a very easy condition on the equation defining the smooth plane sextic,
as shown by the following result.

\begin{proposition}
Assume that in $f$ there is a variable, say $x$, appearing with even powers;
in other words, $f(x,y,z)=f'(x^2,y,z)$ for some polynomial $f'$.
Then $X$ is the double cover of the surface $X':v^2=f(u,s,t)$ inside the weighted projective space $\PP (3,1,1,2)$ with coordinates $u,s,t,v$.
The surface $X'$ is a del Pezzo surface of degree 1.
\end{proposition}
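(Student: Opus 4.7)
The plan is to use the evident involution on $X$ coming from the symmetry $x\mapsto -x$ of the defining equation: take its geometric quotient, identify it with $X'$, and verify via adjunction that $X'$ has the numerical invariants of a del Pezzo of degree $1$.

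Since $f(x,y,z)=f'(x^2,y,z)$, the map $\sigma\colon (x:y:z:w)\mapsto (-x:y:z:w)$ is a well-defined involution of $X$. The subring of $\sigma$-invariants of the homogeneous coordinate ring of $\PP(1,1,1,3)$ is generated by $u:=x^2$, $y$, $z$, and $w$, of weights $2,1,1,3$, so the rational map $(x:y:z:w)\mapsto (x^2:y:z:w)$ factors through $X/\sigma$ and identifies the quotient with the weighted hypersurface $v^2=f'(u,s,t)$ inside $\PP(2,1,1,3)\cong\PP(3,1,1,2)$. This exhibits $X\to X'$ as a double cover, ramified along the image of the fixed curve $\{x=0\}\cap X$.

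To check that $X'$ is a del Pezzo surface of degree $1$, I would apply adjunction on $\PP:=\PP(1,1,2,3)$. The canonical sheaf of $\PP$ is $\cO_\PP(-7)$, so for the weighted sextic $X'$ one obtains $K_{X'}=\cO_{X'}(-7+6)=\cO_{X'}(-1)$. Hence $-K_{X'}$ is the restriction of the ample generator of $\Pic\PP$, and the standard weighted-projective intersection formula yields $(-K_{X'})^2=\cO_\PP(1)^2\cdot X'=\frac{6}{1\cdot 1\cdot 2\cdot 3}=1$.

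Finally, smoothness of $X'$ must be verified. The singular points of $\PP(1,1,2,3)$ are $(0:0:1:0)$ and $(0:0:0:1)$; the second never lies on $X'$, and the first would lie on $X'$ only if the coefficient of $u^3$ in $f'$ (equivalently, of $x^6$ in $f$) vanished, but a direct Jacobian calculation then forces the plane sextic $B$ to be singular at $(1:0:0)$, contradicting our hypothesis. Away from the ambient singular points, $X'$ is the geometric quotient of the smooth surface $X$ by an involution whose fixed locus lies in the smooth divisor $\{x=0\}\cap X$ and on whose conormal bundle $\sigma$ acts by $-1$; such a quotient is locally modelled by $\A^2\to\A^2$, $(x,y)\mapsto(x^2,y)$, so smoothness of $X$ propagates to smoothness of $X'$. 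The main subtlety is thus the ambient-singularity check; the adjunction and invariant-theory steps are routine.
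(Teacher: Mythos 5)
Your proof is correct, and it takes a genuinely more self-contained route than the paper's. For the double-cover part the two arguments agree in substance: the paper simply observes that $(x:y:z:w)\mapsto(x^2:y:z:w)$ is well defined and $2$-to-$1$, while you additionally identify the target as the quotient of $X$ by the involution $\sigma\colon x\mapsto -x$ via the invariant ring $k[x^2,y,z,w]$; in doing so you silently correct two typos in the statement, since the target equation should read $v^2=f'(u,s,t)$ and the coordinates $(u,s,t,v)$ must have weights $(2,1,1,3)$ (i.e.\ the ambient space is $\PP(1,1,2,3)$ up to reordering), exactly as you use them. For the del Pezzo part the routes diverge: the paper delegates everything to Koll\'ar's classification of del Pezzo surfaces as weighted hypersurfaces (Theorem III.3.5 of his book), whereas you reprove it from scratch by adjunction ($K_{X'}=\cO_{X'}(-1)$ and $(-K_{X'})^2=6/(1\cdot 1\cdot 2\cdot 3)=1$) together with an explicit smoothness verification. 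That smoothness check is the real added content, and it is sound: $(0:0:0:1)$ never lies on $X'$; the point $(0:0:1:0)$ lies on $X'$ only if the $x^6$-coefficient of $f$ vanishes, and since $f=f'(x^2,y,z)$ contains no $x^5y$ or $x^5z$ terms this forces $f\in(y,z)^2$, making $B$ singular at $(1:0:0)$; away from the ambient singular points smoothness descends from $X$ through the quotient, and the same nonvanishing of the $x^6$-coefficient rules out the only candidate isolated fixed point $(1:0:0:0)$, so the fixed locus is indeed the divisor $\{x=0\}\cap X$ (which is automatically smooth in characteristic $0$, being the fixed locus of an involution on a smooth surface). The paper's citation buys brevity but leaves the needed smoothness of $X'$ implicit; your version makes the hypothesis that $B$ is smooth do visible work and exhibits how its failure would produce exactly the quotient ($A_1$-type) singularity one expects. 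One cosmetic imprecision: on $\PP(1,1,2,3)$ the sheaf $\cO(1)$ is not invertible at the two singular points, so it is not literally ``the ample generator of $\Pic\PP$'' (that role is played by $\cO(6)$); since $X'$ avoids the singular locus, $\cO_{X'}(1)$ is nonetheless an ample line bundle and your computation of $(-K_{X'})^2$ stands.
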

\begin{proof}
To prove that $X$ is a double cover of $X'$ 
just notice that the map 
$$(x:y:z:w)\mapsto (x^2:y:z:w)$$ 
is well defined and 2-to-1.

The second part of the statement follows directly from~\cite[Theorem III.3.5]{Kol96}.
\end{proof}

\begin{corollary}
Assume $f(x,y,z)=f'(x^2,y,z)$ for some polynomial $f'$.
Then $\rho (\Xbar )\geq 9$.
\end{corollary}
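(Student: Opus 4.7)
The plan is to exploit the double cover structure produced by the preceding proposition: we have a degree-$2$ morphism $\pi\colon \Xbar \to \overline{X'}$, where $X'$ is a del Pezzo surface of degree $1$. Since pulling back divisors along a finite morphism interacts well with the intersection pairing, we expect to transfer the full Picard rank of $\overline{X'}$ to a sublattice of $\Pic \Xbar$, which will give the bound $9$.

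First I would recall that a (geometric) del Pezzo surface of degree $1$ is isomorphic to the blow-up of $\PP^2_{\kbar}$ at $8$ points in (almost) general position; its Picard group is free of rank $9$, generated by the pullback of the hyperplane class and the $8$ exceptional classes. In particular, $\rho(\overline{X'})=9$ and $\Pic \overline{X'}$ is torsion-free. Next I would consider the pullback homomorphism
$$
\pi^*\colon \Pic \overline{X'} \longrightarrow \Pic \Xbar.
$$
Using the projection formula for the finite flat morphism $\pi$ of degree $2$, one has $\pi_* \pi^* D = 2\,D$ for every $D \in \Pic \overline{X'}$; combined with torsion-freeness of $\Pic \overline{X'}$, this shows that $\pi^*$ is injective. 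Therefore $\Pic \Xbar$ contains a sublattice of rank at least $\rk \Pic \overline{X'} = 9$, and so $\rho(\Xbar) \geq 9$.

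The main point to be careful about is simply the injectivity of $\pi^*$: one needs to know that $\overline{X'}$ is normal (which it is, being smooth as a del Pezzo surface) so that the projection formula applies and $\pi_*\pi^*$ acts as multiplication by the degree on divisor classes. Everything else is a direct consequence of the cited proposition and the standard description of del Pezzo surfaces of degree $1$, so I do not expect any genuine obstacle beyond this short verification.
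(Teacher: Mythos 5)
Your proof is correct and follows essentially the same route as the paper: pull back the rank-$9$ Picard lattice of the degree-$1$ del Pezzo surface along the double cover $\pi$. The only difference is that the paper simply asserts the nine pullback classes are linearly independent, while you justify this via the projection formula $\pi_*\pi^*D = 2D$ together with torsion-freeness of $\Pic \overline{X'}$ --- a detail worth making explicit, and your verification of it is sound.
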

\begin{proof}
By hypothesis $X$ is a double cover of a del Pezzo surface of degree $1$, say $\pi \colon X\to Y$.
It is well known that the Picard group of a del Pezzo surface of degree $1$ is isomorphic to $\ZZ^{\oplus 9}$ (see, for example,~\cite[Proposition IV.25.1]{Man86}).
Let $L_1,...,L_9$ be the nine generators of the geometric Picard lattice of $Y$.
Then the pull-backs $\pi^*L_1,...,\pi^*L_9$ are nine linearly independent classes in $\Pic \Xbar$,
proving the statement.
\end{proof}

\subsection{Propagating the divisors}

Let $\Sigma_0$ be a set of divisors of $\Xbar$ (not necessarily obtained by the methods exposed in 
Subsections~\ref{ss:doublesextic} and~\ref{ss:doubledP}). 
Assume all the divisors in $\Sigma_0$ are  given by explicit equations.

If a subgroup $G\subseteq \Aut \Xbar$ is given in the input,
then we can let $G$ act on $\Sigma_0$,
obtaining the set $G\cdot \Sigma_0 =: \Sigma_1$.

Furthermore, from the procedure we see that the divisors in $\Sigma_0$ need not to be defined over $k$,
and the same holds a priori for the automorphisms in $G$.
So let $K\supset k$ be the minimal Galois extension such that all the elements of $\Sigma_1$ can be defined over $K$.
Then the Galois group $\Gal (K/k)$ acts on $\Sigma_1$ by acting on the coefficients of the equations defining the divisors.
Let $\Sigma$ denote the set $\Gal (K/k)\cdot \Sigma_1$ obtained by letting $\Gal (K/k)$ act on $\Sigma_1$,
and let $\Lambda:=\langle \Sigma \rangle_\ZZ$ be the sublattice of $\Pic \Xbar$ spanned (over $\ZZ$) by the elements of $\Sigma$.  

\begin{lemma}
The sublattice $\Lambda\subset \Pic \Xbar$ is stable under the action of $G$ and $\Gal (K/k)$.
\end{lemma}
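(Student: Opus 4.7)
The plan is to reduce the claim to a statement about the generators of $\Lambda$. Since both $G$ and $\Gal(K/k)$ act $\ZZ$-linearly on $\Pic \Xbar$ and $\Lambda$ is by construction the $\ZZ$-span of $\Sigma$, it suffices to prove that $\varphi \cdot \Sigma \subseteq \Lambda$ for every $\varphi$ in either group.

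For the Galois action the argument is tautological. By construction $\Sigma = \Gal(K/k) \cdot \Sigma_1$, so for any $\tau \in \Gal(K/k)$ one has $\tau \cdot \Sigma = \tau \cdot \Gal(K/k) \cdot \Sigma_1 = \Gal(K/k) \cdot \Sigma_1 = \Sigma$, which already implies that $\Lambda$ is $\Gal(K/k)$-stable.

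For the $G$-action I would pick a typical generator $D = \sigma(g(E)) \in \Sigma$ with $\sigma \in \Gal(K/k)$, $g \in G$ and $E \in \Sigma_0$, and, for an arbitrary $h \in G$, use the commutation identity
\[
h \circ \sigma = \sigma \circ h^{\sigma^{-1}}, \qquad h^{\sigma^{-1}} := \sigma^{-1} \circ h \circ \sigma,
\]
which encodes how the geometric automorphism $h$ fails to commute with the semi-linear Galois action. Applying it gives
\[
h(D) = h\bigl(\sigma(g(E))\bigr) = \sigma\bigl((h^{\sigma^{-1}} \circ g)(E)\bigr),
\]
and as soon as $h^{\sigma^{-1}}$ still belongs to $G$, the right-hand side lies in $\sigma \cdot (G \cdot \Sigma_0) = \sigma \cdot \Sigma_1 \subseteq \Sigma$, so $h \cdot \Lambda \subseteq \Lambda$.

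The main obstacle is therefore exactly this containment $h^{\sigma^{-1}} \in G$, i.e., the requirement that $G$ itself be stable under conjugation by $\Gal(K/k)$. When all elements of $G$ are defined over $k$ this is automatic; in the general case one should first replace $G$ by its $\Gal(K/k)$-closure (enlarging $K$ if new coefficients are introduced, and rebuilding $\Sigma_1$ and $\Sigma$ with the enlarged data). Once this normalisation is in place, the computation above shows that $\Lambda$ is simultaneously $G$- and $\Gal(K/k)$-stable.
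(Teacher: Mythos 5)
Your proof is correct and follows the same basic route as the paper, whose entire proof consists of the single sentence that $G$ and $\Gal(K/k)$ act as permutation groups on the set of generators $\Sigma$ of $\Lambda$. What you add is the verification that this sentence actually requires. For the Galois action the claim is indeed tautological, exactly as you say, since $\Sigma=\Gal(K/k)\cdot\Sigma_1$ is by construction a union of Galois orbits. For the $G$-action, however, the paper silently uses a compatibility between the geometric automorphisms and the semi-linear Galois action, which is precisely the commutation identity $h\circ\sigma=\sigma\circ h^{\sigma^{-1}}$ that you write down: since $\Sigma=\Gal(K/k)\cdot G\cdot\Sigma_0$, one gets $h(\sigma(g(E)))=\sigma\bigl((h^{\sigma^{-1}}\circ g)(E)\bigr)$, and this lies in $\Sigma$ only when $h^{\sigma^{-1}}\in G$, i.e., when $G$ is stable under Galois conjugation --- automatic when the transformations in $G$ have coefficients in $k$, but a genuine hypothesis otherwise. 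Your fix (replace $G$ by its closure under conjugation by the Galois group, enlarging $K$ and rebuilding $\Sigma_1$ and $\Sigma$ if new coefficients appear) is the right normalisation, and it is harmless in the context of the algorithm, since the enlarged $\Sigma$ still consists of explicitly given divisors. So your argument is not merely an expansion of the paper's one-liner: it makes explicit, and repairs, an implicit assumption that the published proof glosses over, at the mild cost of possibly enlarging the input group $G$.
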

\begin{proof}
Both $G$ and $\Gal (K/k)$ act as permutation groups of the generators of $\Lambda$.
\end{proof}
\begin{remark}
Notice that $\Sigma$ is a set of generators for $\Lambda$,
but it does not need to be minimal.
In fact, 
we have seen that as soon as 
we include the hyperplane section and 
the two components of the pull-back of plane curve everywhere tangent to $B$,
then these three divisors are linearly equivalent.
\end{remark}

\section{Step III: computing the intersection numbers}\label{s:Intersection}

Assume that using the methods presented in 
Section~\ref{s:Divisors}
we get a set of divisors $\Sigma=\{ H, D_1,...,D_n \}$,
where $H$ is the hyperplane section.
We need now to compute the intersection matrix of these divisors.

As all the divisors come from explicit equations, 
this can easily be done by simply computing 
the number of points, with multiplicity,
of the (zero-dimensional) intersection $D_i \cap D_j$.
This task is easily taken care of by a computer.

Computing the self-intersection is even easier,
as shown by the following lemma.
\begin{lemma}
For each $i=1,...,n$ we have $D_i^2=-2$.
Furthermore, $H^2=2$.
\end{lemma}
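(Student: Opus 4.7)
The plan is to treat the two equalities separately: $H^2=2$ will follow from the projection formula for the double cover $\pi\colon X\to \PP^2$, while $D_i^2=-2$ will follow from the adjunction formula on $\Xbar$ combined with the fact that each $D_i$ is a smooth rational curve.

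First I would handle $H^2=2$. By definition $H=\pi^*\ell$ for a line $\ell\subset \PP^2$. Since $\pi$ is a finite morphism of degree $2$, the projection formula gives
$$
H^2=(\pi^*\ell)^2=(\deg \pi)\cdot \ell^2=2\cdot 1=2.
$$
Equivalently, this is just the statement that $X$ has degree $2$ as a polarized K3 surface, which is built into the setup of Section~\ref{s:InOut}.

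Next I would handle $D_i^2=-2$. By the constructions of Section~\ref{s:Divisors}, each $D_i$ is a component of the splitting $\pi^{-1}(C)=D_{i,1}+D_{i,2}$ provided by Proposition~\ref{p:Split} for some plane curve $C$ with $g(C)=0$; in the concrete families considered (tritangent lines, sixtangent conics, and pullbacks from the auxiliary del Pezzo surface of degree $1$) the restriction $\pi|_{D_i}\colon D_i\to C$ is an isomorphism onto a smooth rational curve, so $D_i\cong \PP^1$. Since $\Xbar$ is K3 we have $K_{\Xbar}\sim_{\text{lin}} 0$, and the adjunction formula reads
$$
2g(D_i)-2 = D_i\cdot (D_i+K_{\Xbar}) = D_i^2,
$$
so $D_i^2=2\cdot 0-2=-2$.

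The only mild obstacle is verifying that $D_i$ really is smooth rational, rather than a singular curve of geometric genus zero. In the latter case one must either replace $g(D_i)$ by the arithmetic genus (and justify $p_a(D_i)=0$) or pass to the strict transform on a resolution before applying adjunction. For the divisors actually produced by the methods of Section~\ref{s:Divisors} in the smooth-sextic setting assumed throughout, this issue does not arise, and the lemma follows at once.
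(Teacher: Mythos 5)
Your proof is correct and takes essentially the same approach as the paper, which likewise derives $D_i^2=-2$ from the adjunction formula combined with $K_X=0$ and the genus-$0$ property of the $D_i$. You are in fact slightly more thorough: the paper's proof silently omits the $H^2=2$ computation (which you supply via the projection formula) and glosses over the arithmetic-versus-geometric genus caveat for singular rational curves that you rightly flag, its proof invoking only that $D_i$ is ``isomorphic to a genus $0$ curve.''
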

\begin{proof}
For every $i=1,...,n$, 
the divisor $D_i$ is by construction 
isomorphic to a genus $0$ curve.
Then, using the adjunction formula (cf.~\cite[Proposition V.1.5]{Har77}), 
keeping in mind that $X$ is a K3 surface and hence $K_X=0$, 
it follows that $D_i^2=-2$.
\end{proof}

Let $A$ denote the $n\times n$ intersection matrix obtained in this way.
Then one can define $\Lambda$ to be the lattice having Gram matrix $A$.
Then one can ask for a minimal set of generators,
say $\{ L_1,...,L_r\}$. 
Using this set of generators one can immediately compute the rank of $\Lambda$, namely $r$,
the Gram matrix $M:=(L_i\cdot L_j)_{1\leq i,j\leq r}$,
and the determinant of $\Lambda$, by definition $\det M$.

\begin{remark}\label{r:Singularities}
Things become more cumbersome if we allow $B$ to have du Val singularities;
in particular, let us consider the case of $B$ having singularities of $A$--type.
In this case the surface $X$ given by the double cover of $\PP^2$ ramified above $C$ turns out to be a singular surface, with singularities of $A$--type above the singular points of $B$.
By resolving the singularities we obtain the K3 surface $\tilde{X}$.
If $P\in X$ is a singular point of type $A_n$,
then on $\tilde{X}$ there are $n$ exceptional divisor,
whose intersection matrix is exactly the matrix $A_n$.
These exceptional divisors are pairwise conjugated under the double-cover involution of $\tilde{X}$.
This ambiguity affects the computation of the intersection numbers of the exceptional divisors with the divisors $D_1,...,D_n$,
considering that in general computing an explicit projective model for $\tilde{X}$ is not computationally friendly. 
A way to overcome this problem is to label the exceptional divisors according to the intersection with the divisors $D_i$'s.
Despite this trick, there might still be some ambiguity left.
In this case, one can just try all the possible combinations of intersection numbers (they are bounded) 
and rule out the combinations that return an intersection matrix with rank exceeding $\tau$.
An explicit application of this method is given in~\cite{FvS18}.
\end{remark}

\section{Step IV: Check whether $\Lambda = \Pic \Xbar$}\label{s:Check}

In order to perform this step we have to assume that $r$ is in fact
equal to $\rho (\Xbar)$.
This assumption is fulfilled if, for example, we have $r=\tau$,
where $\tau$ is the upper bound for $\rho (\Xbar)$ obtained in Section~\ref{s:Bound}.
The strategy used in this step is inspired by the proof of~\cite[Theorem 7]{ST10}.
Throughout this section we will freely use the results about lattice theory exposed in~\cite[Section 1.1]{Fes16} and 
we will use simply $P$ to denote $\Pic \Xbar$.
Let $T=T(\Xbar)$ denote the geometric transcendental lattice of $X$, that is,
the orthogonal complement $P^{\perp}\subseteq H^2(\Xbar,\ZZ)$ of $P$ inside $H^2(X,\ZZ)\cong \Lambda_{K3}$.
If $L$ is a lattice, we define the discriminant group $A_L$ as the quotient $L^*/L$, where $L^*$ is the dual lattice of $L$.

Let $\iota \colon \Lambda \hookrightarrow P$ the inclusion map;
as $r=\tau$, the map $\iota$ has finite cokernel, i.e., 
the index $[P: \Lambda]$ is finite, 
and $\det \Lambda = [P:\Lambda]^2\det P$. 

We have to preliminary tests to check whether $\Lambda=P$,
as shown by the following proposition.
\begin{proposition}\label{p:FirstChecks}
Let $P, T$, and $\Lambda$ defined as above.
The following statements hold:
\begin{enumerate}[i)]
\item If $\det \Lambda$ is square-free, then $P=\Lambda$;
\item if the length $\ell(A_\Lambda)$ of the discriminant group $A_\Lambda$ is greater than $22-r$,
then $\Lambda\neq P$.
\end{enumerate}
\end{proposition}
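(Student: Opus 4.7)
The plan is to exploit two standard lattice-theoretic facts: (a) for a finite-index sublattice $\Lambda\subseteq P$ one has $\det\Lambda=[P:\Lambda]^2\det P$, which is already noted in the excerpt, and (b) since $P\hookrightarrow H^2(\Xbar,\ZZ)\cong\Lambda_{K3}$ is a primitive embedding into a unimodular lattice with orthogonal complement $T$, the discriminant forms satisfy $A_P\cong A_T$ (up to sign). With these in hand, both parts are essentially one-line consequences.

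For part (i), the relation $\det\Lambda=[P:\Lambda]^2\det P$ shows that the integer $[P:\Lambda]^2$ divides $\det\Lambda$. If $\det\Lambda$ is square-free, the only perfect square dividing it is $1$, forcing $[P:\Lambda]=1$, i.e.\ $\Lambda=P$. The only mild subtlety here is that we must know $\det P$ is a (nonzero) integer, which is guaranteed by the hypothesis $r=\tau=\rk P$ so that $\iota$ has finite cokernel and in particular $\det P\neq 0$.

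For part (ii), I would argue by contrapositive. Suppose $\Lambda=P$. Since $P$ embeds primitively into the unimodular K3--lattice $\Lambda_{K3}$ of rank $22$, and $T=P^{\perp}$ has rank $22-r$, the discriminant group $A_P$ is isomorphic to $A_T$ as finite abelian groups. Since any finite abelian group $A_T=T^*/T$ is generated by at most $\rk T$ elements, we obtain
\[
\ell(A_\Lambda)=\ell(A_P)=\ell(A_T)\leq \rk T = 22-r.
\]
Contrapositively, if $\ell(A_\Lambda)>22-r$, then $\Lambda\neq P$.

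The only nontrivial input is the isomorphism $A_P\cong A_T$ for primitive embeddings into a unimodular lattice; I would simply cite \cite[Section 1.1]{Fes16} (explicitly invoked just above the proposition) or the original reference in Nikulin, rather than re-prove it. Beyond that, both claims reduce to elementary divisibility and counting, so I do not foresee any real obstacle.
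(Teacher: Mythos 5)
Your proposal is correct and follows essentially the same route as the paper: part (i) is the same one-line divisibility argument from $\det\Lambda=[P:\Lambda]^2\det P$, and part (ii) is the same chain $\ell(A_\Lambda)=\ell(A_P)=\ell(A_T)\leq\rk T=22-r$ (the paper states it directly, you phrase it as a contrapositive, which is only a cosmetic difference), using unimodularity of $\Lambda_{K3}$ to get $A_P\cong A_T$ and the standard bound of the length by the rank. Your extra remarks (primitivity of $P\hookrightarrow\Lambda_{K3}$ underlying $A_P\cong A_T$, and $\det P\neq 0$) are fine and, if anything, slightly more careful than the paper's write-up.
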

\begin{proof}
\begin{enumerate}[i)]
\item The statement follows directly from  $\det \Lambda = [P:\Lambda]^2\det P$.
\item As $T$ is the orthogonal complement of $P$ inside $H^2(\Xbar,\ZZ)$, 
it follows that $\rk T = \rk H^2 - \rk P$.
It is known that $H^2(\Xbar,\ZZ)$ is isomoprhic to the K3 lattice $\Lambda_{K3}$, which  has rank $22$;
by initial assumption we have $\rk P = \rk \Lambda = r$.
It follows that $\rk T = 22-r$.
As $\Lambda_{K3}$ is unimodular,
$A_P\cong A_T$.
It is a well known result that the length of the discriminant group of a lattice cannot be greater than the rank of the lattice,
and so
$$
\ell(A_{P})=\ell (A_T)\leq \rk T = 22-r,
$$ 
proving the statement.
\end{enumerate}
\end{proof}

\begin{remark}
Needless to repeat, 
in case during the algorithm Propsition~\ref{p:FirstChecks}(ii) tells us that our lattice $\Lambda$ cannot be the full geometric Picard lattice,
we ought to go back to Step II and look for more divisors. 
\end{remark}
\begin{remark}
Of course, as $\ell (A_\Lambda) \leq r$,
the second statement of the proposition is only useful when $r>11$.
\end{remark}
 
 In most cases, Proposition~\ref{p:FirstChecks} will not return a positive answer to the check.
 This is why we need a finer strategy to prove $\Lambda=P$.
Let $p$ be any prime.
Then the map $\iota$ induces the map
$$
\iota_p\colon \Lambda/p\Lambda \to P/pP.
$$
Notice that $\iota_p$ is injective if $p^2$ does not divide $\det \Lambda$,
otherwise it does not need to be so.

\begin{proposition}
The map $\iota_p$ is injective for every prime $p$ such that $p^2|\det \Lambda$ if and only if $P=\Lambda$.
\end{proposition}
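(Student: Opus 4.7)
The plan is to prove the two directions separately, with the forward direction being essentially trivial and the backward direction following from a short argument using Cauchy's theorem on the finite abelian group $P/\Lambda$.

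For the forward direction, if $P = \Lambda$, then $\iota$ is the identity and so is $\iota_p$ for every prime $p$; in particular it is injective. Nothing to do.

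For the converse, I would argue by contrapositive. Assume $\Lambda \subsetneq P$ and set $n = [P:\Lambda] > 1$. The key identity is $\det \Lambda = n^2 \det P$, which was recalled just above the statement. Pick any prime $p$ dividing $n$; then $p^2$ divides $n^2$, hence $p^2 \mid \det \Lambda$. So to finish, it suffices to exhibit a nonzero element in the kernel of $\iota_p\colon \Lambda/p\Lambda \to P/pP$ for this particular $p$.

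Since $p \mid n$ and $P/\Lambda$ is a finite abelian group of order $n$, Cauchy's theorem produces an element $v + \Lambda \in P/\Lambda$ of order $p$, i.e., a vector $v \in P$ with $v \notin \Lambda$ but $pv \in \Lambda$. I claim $pv$ represents a nonzero class in $\Lambda/p\Lambda$: indeed, $pv \in p\Lambda$ would mean $pv = p\lambda$ for some $\lambda \in \Lambda$, and since $P$ is torsion-free this forces $v = \lambda \in \Lambda$, contradicting $v \notin \Lambda$. On the other hand, $\iota_p(pv + p\Lambda) = pv + pP = 0$ in $P/pP$. So $\iota_p$ is not injective, completing the contrapositive.

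There is no real obstacle here; the only point that needs a moment of care is the implication $p \mid [P:\Lambda] \Rightarrow p^2 \mid \det \Lambda$, which is why the hypothesis is phrased with $p^2$ rather than $p$. Everything else is a direct consequence of $\Lambda$ being a finite-index sublattice of a torsion-free abelian group. The proposition will be used in practice by factoring $\det \Lambda$, extracting the finitely many primes $p$ with $p^2 \mid \det \Lambda$, and for each such $p$ reducing the generating set of $\Lambda$ modulo $p$ and checking $\FF_p$-linear independence of the images inside $P/pP$ (the latter step being the content of the next section of the paper).
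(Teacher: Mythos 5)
Your proof is correct, and it takes a mildly but genuinely different route from the paper's. The paper argues in the forward direction: it identifies $\ker \iota_p$ as the quotient $(\Lambda\cap pP)/p\Lambda$, uses the remark preceding the proposition (that $\iota_p$ is automatically injective whenever $p^2\nmid \det\Lambda$) to conclude this kernel is trivial for \emph{every} prime, deduces that no element of $\Lambda$ is $p$-divisible in $P$ without being $p$-divisible in $\Lambda$ --- i.e.\ that $\Lambda$ is primitively embedded in $P$ --- and then invokes the general fact that a primitive finite-index sublattice must be the whole lattice. You instead run the contrapositive and make everything explicit: Cauchy's theorem on the finite group $P/\Lambda$ produces $v\in P\setminus\Lambda$ with $pv\in\Lambda$, and $[pv]$ is then a visibly nonzero class in $\Lambda/p\Lambda$ killed by $\iota_p$ (your torsion-freeness argument for $pv\notin p\Lambda$ is exactly right, and torsion-freeness of $P=\Pic\Xbar$ is guaranteed by Proposition~\ref{p:basicsPic}). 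Your element $pv$ is precisely a witness to the nontriviality of the paper's kernel $(\Lambda\cap pP)/p\Lambda$, so the two arguments are two sides of the same coin; what yours buys is self-containedness --- the step ``primitive plus finite index implies equality'' is proved inline rather than cited --- and a cleaner handling of the restriction to primes with $p^2\mid\det\Lambda$: where the paper silently combines the hypothesis with the automatic injectivity at the remaining primes, you verify directly that $p\mid [P:\Lambda]$ forces $p^2\mid\det\Lambda$ via $\det\Lambda=[P:\Lambda]^2\det P$ (using that $\det P$ is a nonzero integer, which holds since the pairing on $P$ is integral and non-degenerate). You even correctly flag this divisibility step as the reason the hypothesis is phrased with $p^2$ rather than $p$.
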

\begin{proof}
Notice that if $P=\Lambda$, then $\iota_p$ is trivially injective as it reduces to the identity.
To see the converse, notice that
the kernel of $\iota_p$ is the quotient $\frac{\Lambda\cap pP}{p\Lambda}$.
By hypothesis this group is always trivial.
This means that there are no elements of $\Lambda$ that are $p$-divisible in $P$ but not in $\Lambda$,
i.e., $\Lambda$ is primitive in $P$;
as $\Lambda$ has also finite index inside $P$,
it follows that $\Lambda=P$.
\end{proof}

As a priori we do not know what $P$ is, it is impossible for us to compute the kernel of $\iota_p$.
Nevertheless, we can define a subset of $\Lambda/p\Lambda$ that surely contains the kernel of $\iota_p$.
To do this, we need to notice that the intersection form on $P$ induces a bilinear form $b_p\colon (\Lambda/p\Lambda)^2\to \ZZ/p\ZZ$.
We define the set $\Lambda_p$ as
$$
\{ [x]\in \Lambda/p\Lambda \;|\; \forall [y]\in \Lambda/p\Lambda 
\;\; b_p([x],[y])=0 \; \textrm{ and } \; x^2 \equiv 0 \bmod 2p^2 \}.
$$
\begin{proposition}
Let $\Lambda_p$ be defined as above. The following statements hold:
\begin{enumerate}[i)]
\item the kernel of $\iota_p$ is contained in $\Lambda_p$;
\item the set $\Lambda_p$ is fixed by all the isometries of $P$.
\end{enumerate}
\end{proposition}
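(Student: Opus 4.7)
For part (i), the plan is a direct unpacking of the definitions, with the one non-trivial ingredient being the parity of $P$. I would take $[x] \in \Lambda/p\Lambda$ in $\ker \iota_p$, so that the representative $x \in \Lambda$ can be written as $x = py$ for some $y \in P$. For any $[z] \in \Lambda/p\Lambda$, one computes $b_p([x],[z]) = (x \cdot z) \bmod p = p(y\cdot z) \bmod p = 0$, which gives the first condition. For the self-intersection, $x^2 = p^2 y^2$; by Proposition~\ref{p:basicsPic}(6) the lattice $P = \Pic \Xbar$ is even, so $y^2 \in 2\ZZ$, and consequently $x^2 \in 2p^2\ZZ$, which is exactly the second condition. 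Thus $[x] \in \Lambda_p$.

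For part (ii), the underlying observation is that every condition defining $\Lambda_p$ is phrased purely in terms of the restriction of the intersection form to $\Lambda$ (the bilinear form $b_p$ is the reduction mod $p$ of the intersection pairing, and the self-intersection condition $x^2 \equiv 0 \bmod 2p^2$ is also intrinsic to the pairing). Consequently, any isometry $\phi$ of $P$ that carries $\Lambda$ to itself descends to an isometry of $\Lambda/p\Lambda$ which commutes with $b_p$ and which preserves $x^2 \bmod 2p^2$, hence sends $\Lambda_p$ to $\Lambda_p$. In practice, the isometries relevant for the algorithm — those coming from the subgroup $G \leq \Aut(\Xbar)$ and from $\Gal(K/k)$ — do preserve $\Lambda$ by the lemma at the end of Section~\ref{s:Divisors}, so this is exactly the required stability.

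Neither part presents a genuine obstacle: (i) is a two-line calculation once evenness of $P$ is invoked, and (ii) is a tautology modulo the convention that an isometry of $P$ acts on $\Lambda/p\Lambda$ precisely when it preserves $\Lambda$. The only point where I would be careful in the write-up is to make explicit the convention being used in (ii), since a priori an isometry of $P$ need not stabilise $\Lambda$; clarifying this (either by restricting to the stabiliser of $\Lambda$ in the isometry group of $P$, or by appealing to the intrinsic dependence of $\Lambda_p$ on the bilinear form only) is the one stylistic choice to resolve.
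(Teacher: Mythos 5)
Your proof is correct. The paper itself gives no inline argument for this proposition---it simply cites \cite[Lemma 1.1.23]{Fes16}---and your direct verification is exactly the natural one that the cited lemma encapsulates: for (i), writing a kernel representative as $x=py$ with $y\in P$ and using evenness of $P$ (Proposition~\ref{p:basicsPic}(6)) to get $x^2=p^2y^2\in 2p^2\ZZ$; for (ii), observing that the defining conditions of $\Lambda_p$ depend only on the intersection form, so any isometry stabilising $\Lambda$ preserves them. Your closing caveat is well taken and worth keeping: as literally stated, ``all the isometries of $P$'' do not even act on $\Lambda/p\Lambda$ unless they stabilise $\Lambda$, and the statement is rescued precisely by the lemma at the end of Section~\ref{s:Divisors} showing that the isometries relevant to the algorithm (those induced by $G$ and $\Gal(K/k)$) do stabilise $\Lambda$; one could also note that the second defining condition of $\Lambda_p$ is only well defined on classes $[x]$ already satisfying the first, which your argument sidesteps harmlessly by working with actual lattice representatives.
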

\begin{proof}
Lemma~\cite[Lemma 1.1.23]{Fes16}. 
\end{proof}
\begin{remark}
As remarked in Remark~\cite[Remark 1.1.22]{Fes16},
it is very easy to compute $\Lambda_p$.
First notice that $\Lambda/p\Lambda$ is a $\FF_p$ vector space of dimension $r$, hence it is finite;
if $M$ is the Gram matrix of $\Lambda$, 
then the bilinear form on $\Lambda/p\Lambda$ is given by 
the reduction $\overline{M}$ of $M$ modulo $p$.
So $\Lambda_p$ corresponds to the elements $[x]$ in the kernel of $\overline{M}$ such that $x^2\equiv 0 \bmod 2p^2$.
The kernel of $\overline{M}$ is finite and can be easily computed;
then one can go through all the elements and check which of them satisfy also the second condition.
\end{remark}

As $\Lambda_p$ contains the kernel of $\iota_p$ but it might very well be much larger,
it might happen that $\Lambda_p$ is non trivial even though the kernel of $\iota_p$ is.
We do not have a way to completely exclude this case,
but the isometries of $P$ can be used to rule out some of these pathological cases, as shown in the following result.

\begin{proposition}\label{p:orbit}
Let $H$ be a group of isometries induced by automorphisms of $X$ and/or elements of $\Gal (K/k)$.
Let $p$ be a prime, and let $[x]$ be an element of $\Lambda_p$.
Let $V$ denote the subspace of $\Lambda/p\Lambda$ spanned the orbit of $[x]$ under the action of $H$.
Let $e$ denote the dimension of $V$.
If $p^{2e}$ does not divide $\det \Lambda$,
then $[x]$ is not in the kernel of $\iota_p$.
\end{proposition}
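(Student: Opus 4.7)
The plan is to argue by contrapositive: assume $[x] \in \ker \iota_p$ and deduce that $p^{2e} \mid \det \Lambda$. The argument has three ingredients: (1) the kernel of $\iota_p$ is stable under the group $H$, (2) its $\FF_p$-dimension bounds the $p$-adic valuation of $[P:\Lambda]$, and (3) the index formula $\det \Lambda = [P:\Lambda]^2 \det P$ translates this into a divisibility on $\det \Lambda$.

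First, I would observe that $H$ acts on $\ker\iota_p=(\Lambda\cap pP)/p\Lambda$: by assumption every $\sigma\in H$ is an isometry of $P$ that also preserves $\Lambda$, so it preserves $p\Lambda$ and $\Lambda\cap pP$ and descends to the quotient. Since $\ker\iota_p$ is an $\FF_p$-subspace of $\Lambda/p\Lambda$ stable under $H$ and contains $[x]$, it contains the full orbit $H\cdot[x]$ and hence also its $\FF_p$-span $V$. Consequently $e=\dim V\leq \dim_{\FF_p}\ker\iota_p$.

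Next I would identify $\ker\iota_p$ with $(P/\Lambda)[p]$: tensoring the exact sequence $0\to\Lambda\to P\to P/\Lambda\to 0$ with $\ZZ/p\ZZ$ and using that $\mathrm{Tor}_1^{\ZZ}(M,\ZZ/p\ZZ)=M[p]$ gives
\[
0\to (P/\Lambda)[p]\to \Lambda/p\Lambda\xrightarrow{\iota_p} P/pP\to (P/\Lambda)/p(P/\Lambda)\to 0.
\]
Writing the $p$-primary part $(P/\Lambda)_p$ as a direct sum of cyclic groups $\bigoplus_i\ZZ/p^{a_i}\ZZ$, the $p$-torsion has $\FF_p$-dimension equal to the number of summands, which in turn is at most $v_p\bigl(|(P/\Lambda)_p|\bigr)=v_p([P:\Lambda])$. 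Combining with the previous step, $e\leq v_p([P:\Lambda])$.

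Finally, from $\det\Lambda=[P:\Lambda]^2\det P$ we get $v_p(\det\Lambda)\geq 2\,v_p([P:\Lambda])\geq 2e$, i.e.\ $p^{2e}\mid\det\Lambda$, which is exactly the negation of the hypothesis. The only delicate point is the identification $\ker\iota_p\cong(P/\Lambda)[p]$, but this is a standard Tor computation; the rest is simply a dimension count together with the index formula, so I do not anticipate any serious obstacle.
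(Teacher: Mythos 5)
Your proof is correct. Note that the paper itself does not spell out an argument here: its ``proof'' is a one-line citation to \cite[Proposition 1.1.28]{Fes16}, and your three-step argument is precisely the standard reasoning behind that cited result, so you have in effect supplied the details the paper outsources. Checking the steps: the $H$-stability of $\ker\iota_p=(\Lambda\cap pP)/p\Lambda$ is legitimate because the paper's earlier lemma (Section 5, ``Propagating the divisors'') guarantees that $\Lambda$ is stable under $G$ and $\Gal(K/k)$, so each element of $H$ preserves both $\Lambda$ and $pP$ and descends compatibly to $\iota_p$; the identification $\ker\iota_p\cong(P/\Lambda)[p]$ via $\mathrm{Tor}_1^{\ZZ}(P/\Lambda,\ZZ/p\ZZ)$ is a standard computation (using that $P$ is torsion-free, so $\mathrm{Tor}_1^{\ZZ}(P,\ZZ/p\ZZ)=0$); the bound $\dim_{\FF_p}(P/\Lambda)[p]\leq v_p([P:\Lambda])$ by counting cyclic summands is correct; and the final step uses $v_p(\det\Lambda)=2v_p([P:\Lambda])+v_p(\det P)\geq 2v_p([P:\Lambda])$, which is valid since $\det P$ is a nonzero integer (you might state this explicitly, as it is the one place where integrality of the intersection form on $P$ enters). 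One harmless observation: your contrapositive never uses the hypothesis $[x]\in\Lambda_p$ --- the statement holds for any $[x]\in\Lambda/p\Lambda$; the set $\Lambda_p$ merely identifies the candidates one needs to test in the algorithm.
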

\begin{proof}
It follows immediately from~\cite[Proposition 1.1.28]{Fes16}.
\end{proof}

% Define block styles
\tikzstyle{decision} = [diamond, draw, fill=green!20, 
    text width=6em, text badly centered, node distance=3cm, inner sep=0pt]
\tikzstyle{block} = [rectangle, draw, fill=blue!20, 
    text width=10em, text centered, rounded corners, minimum height=4em]
\tikzstyle{line} = [draw, -latex']
\tikzstyle{cloud} = [draw, ellipse,fill=red!20, node distance=2.5cm,
    minimum height=2em]
    
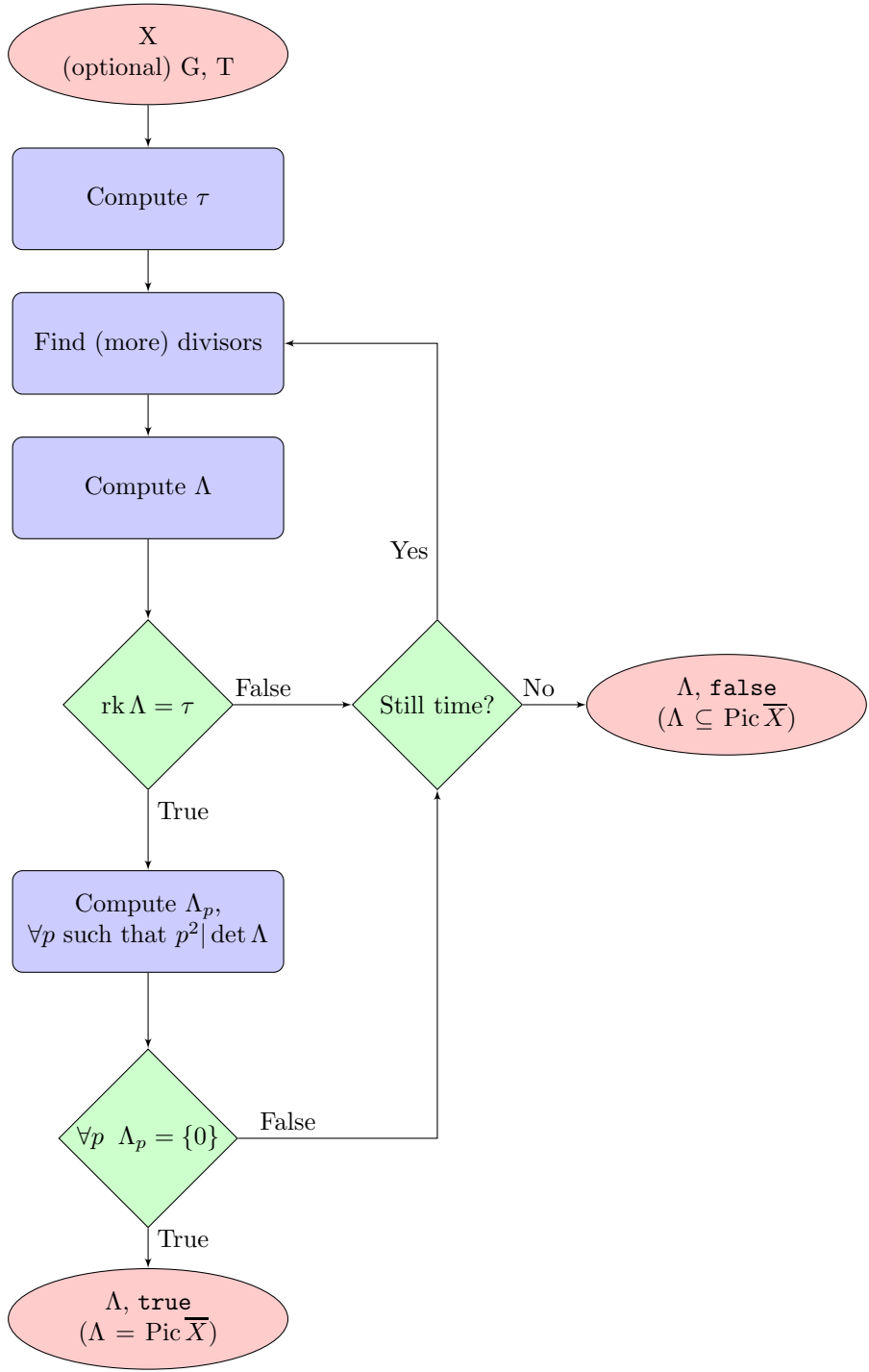
\begin{figure}
\centering
\begin{tikzpicture}[node distance = 2cm, auto]
    % Place nodes
    \node [cloud, text width = 2.5cm, align = center] (init) {X\\  (optional) G, T};
    \node [block, below of=init] (Step1) {Compute $\tau$};
    \node [block, below of=Step1] (Step2) {Find (more) divisors};
    \node [block, below of=Step2] (Step3) {Compute $\Lambda$};
    \node [decision, below of=Step3] (rank) {$\rk \Lambda = \tau$};
    \node [decision, right of=rank, node distance = 4cm] (time) {Still time?};
    \node [cloud, right of = time, text width=2.5cm,align=center, node distance = 4cm] (endtime) {$\Lambda$, \texttt{false}\\ $(\Lambda \subseteq \Pic \Xbar )$};
    \node [block, below of=rank, node distance = 3cm] (Step4) {Compute $\Lambda_p$, \\$\forall p$ such that $p^2|\det \Lambda$ };
    \node [decision, below of=Step4] (kernel) {$\forall p \;\;\Lambda_p=\{0\} $};
    \node [cloud, below of = kernel, text width=2.5cm,align=center] (end) {$\Lambda$,  \texttt{true} \\
    $(\Lambda = \Pic \Xbar)$};
    
% Draw edges
    \path [line] (init) -- (Step1);
    \path [line] (Step1) -- (Step2);
    \path [line] (Step2) -- (Step3);
    \path [line] (Step3) -- (rank);
%    \path [line] (rank) --++ (3,0) node [near start] {False} |- (Step2);
    \path [line] (rank) -- node [near start] {False} (time);
    \path [line] (rank) -- node [near start] {True} (Step4);
    \path [line] (time) -- (4,-4) node [near start] {Yes} --  (Step2);
    \path [line] (time) -- node [near start] {No} (endtime);
    \path [line] (Step4) -- (kernel);
    \path [line] (kernel) -- node [near start] {True} (end);
    \path [line] (kernel) --++ (4,0) node [near start] {False} -- (time);
%    \path [line,dashed] (system) |- (evaluate);
\end{tikzpicture}
\caption{A flow chart describing Algorithm~\ref{a:Main}.}
\label{f:Main}
\end{figure}

\section{What can go wrong}\label{s:Issues}
As said at the beginning of this paper, 
Algorithm~\ref{a:Main} might not terminate.
For example, already by looking at the flow chart in Figure~\ref{f:Main}, 
one can already notice that endless loops are possible.
In this section we are going to see more in detail the possible reasons for the algorithm failing to terminate.
In some of these cases, truncating the algorithm after some time, 
the output might still turn out to be the full geometric Picard lattice,
but we would need to prove it in a different way.
This event can happen in the cases described in subsections~\ref{ss:NonSharp} and~\ref{ss:kpXL};
it can never happen in the case described in Subsection~\ref{ss:m2NoGood}.

\subsection{Non-sharp upper bound}\label{ss:NonSharp}
The first reason why the algorithm can enter into an infinite loop is given by having an upper bound $\tau$ that is larger than $\rho (\Xbar)$.
In fact, in this case, following the algorithm,
we would endlessly look for divisors in order to generate a bigger lattice and meet the upper bound.

If $\tau$ is obtained by means of van Luijk's method or any of its refinements
then, as mentioned before (cf.~\ref{ss:vL}), 
F. Charles gives a result (cf.~\cite[Theorem 1]{Cha14}) to check whether $\tau$ can be trusted as sharp or rather not.
Unfortunately, this result  relies on knowledge of the geometric transcendental lattice $T(\Xbar)$ of $X$,
that is, the orthogonal complement of $\Pic \Xbar$ inside $H^2(\Xbar,\CC)$.

Notice that if the algorithm runs in this kind of problem,
we might still have that $\Lambda=\Pic \Xbar$,
but the algorithm would fail to prove it.
Therefore, after stopping the algorithm (for example introducing a time constrain), 
we might still want to try to prove the the obtained lattice is the full geometric Picard lattice in some other ad hoc way.

\subsection{$-2$-curves might not be sufficient}\label{ss:m2NoGood}
The algorithm produces a sublattice of $\Pic \Xbar$ generated, besides the hyperplane section class, only by classes of curves of genus $0$ (that is, $-2$-curves).
There is no result stating that the Picard lattice of a K3 surface can always be generated by the classes of such curves.
Therefore, if the geometric Picard lattice of our surface $X$ cannot be generated solely by $-2$ curves and the hyperplane section class,
our algorithm is bound to never end and
any output resulting from a forced termination of the algorithm cannot be the whole geometric Picard lattice of $X$.

\subsection{$\Lambda_p$ is too large}\label{ss:kpXL}
Recall the notation introduced in Section~\ref{s:Check}.
By construction, the set $\Lambda_p$ contains the kernel of the map $\iota_p$,
and there is no reason for this inclusion to be an equality,
in fact there are examples in which this inclusion is well far from being an equality.
Therefore it might happen that, although $\Lambda$ is the full geometric Picard lattice, 
$\ker \iota_p$ is trivial and $\Lambda_p$ is not.

We have seen a method to possibly overcome this obstruction,
but it relies on optional extra information (the subgroup $G$ of automorphisms of $X$ and/or the Galois group $\Gal(K/k)$) and again it is not bound to work:
in fact the orbits of an element of $\Lambda_p$ under the action of $G$ and/or $\Gal (K/k)$  might not be big enough to show that the element is not in $\ker \iota_p$ (cf.~Proposition~\ref{p:orbit}).
If the sublattice computed by the algorithm is already the full geometric Picard lattice,
then finding more divisors would not improve (i.e. decrease) the size of $\Lambda_p$,
turning the algorithm into an infinite loop.

As for Subsection~\ref{ss:NonSharp}, 
also in this case might happen that in fact $\Lambda=\Pic \Xbar$ but the algorithm fails to prove it.
So again, after terminating the algorithm, 
we might still want to try to prove that the obtained lattice is the full geometric Picard lattice in some other ad hoc way.

\bibliography{MethodK3Biblio}
\bibliographystyle{plain}

\end{document}